\newtheorem{assumption}{Assumption}
\newcommand{\mb}{\mathbb}
\newcommand{\mc}{\mathcal}
 \title{A Large Deviations Perspective on Policy Gradient Algorithms}
\author{%
\Name{Wouter Jongeneel} \Email{wouter.jongeneel@epfl.ch}\\
 \addr EPFL
 \AND
 \Name{Daniel Kuhn} \Email{daniel.kuhn@epfl.ch}\\
 \addr EPFL
   \AND
  \Name{Mengmeng Li} \Email{mengmeng.li@epfl.ch}\\
 \addr EPFL
}
\begin{document}

\maketitle
\begin{abstract}
    Motivated by policy gradient methods in the context of reinforcement learning, we identify a large deviation rate function for the iterates generated by stochastic gradient descent for possibly non-convex objectives satisfying a Polyak-Łojasiewicz condition. Leveraging the contraction principle from large deviations theory, we illustrate the potential of this result by showing how convergence properties of policy gradient with a softmax parametrization and an entropy regularized objective can be naturally extended to a wide spectrum of other policy parametrizations.
\end{abstract}

\section{Introduction and related work.}

Policy gradient methods are at the core of several reinforcement learning (RL) algorithms.~\textit{e.g.}, see~\cite{sutton1999policy,ref:degris2012model,agarwal2021theory}. As such, a wide adoption of these algorithms is aided by precisely understanding their performance. To that end, in scenarios where policy gradients can only be evaluated stochastically, it is crucial to understand the underlying distribution governing the evolution of policy iterates. Despite the popularity of stochastic policy gradient methods for over 20 years, global convergence has been only understood recently~\citep{fazel2018global,zhang2020global,bhandari2019global,agarwal2021theory}. 
Although global convergence proofs are actively developed for different policy gradient algorithm templates~\citep{bhandari2019global,agarwal2021theory,cen2022fast}, two aspects of the current convergence analysis would benefit from further study: 1) global convergence guarantees are often stated in terms of \textit{expected} suboptimality; and 2) the choice of policy parametrizations impacts convergence behavior~\citep{ref:mei2020escaping}.
In light of these observations, this paper aims to derive sharp convergence rates in \textit{probability} and to provide a unifying approach towards understanding the effect of different policy parametrizations. 
As we build upon a rich history of work, we succinctly comment on: (i) \textit{high probability analysis} in stochastic optimization and RL; and (ii) \textit{policy gradient} methods in RL.


(i)~Despite stochastic gradient descent (SGD) being over 70 years old~\citep{ref:RobbinsMonro51}, there has been a surge of interest in terms of \textit{high probability analysis} for (SGD): for strongly-convex objectives~\citep{ref:harvey2019tight}; non-convex objectives satisfying a Polyak-Łojasiewicz (PL) condition~\citep{madden2020high}; and non-convex objectives~\citep{ref:ghadimi2013stochastic,liu2023high} with Lipschitz gradients. A motivation for studying tight convergence bounds in terms of probabilities over convergence in expectation is due to the fact that the iterates generated by SGD can be brittle and \textit{large deviations} from the expected value can occur,~\textit{e.g.}, see~\cite{ref:gower2020variance} and references therein. Hence, a high-probability bound is attractive since it reassures the practitioner that certain behavior occurs at least with a close-to-$1$ probability for a single realization of iterates.

(ii)~\textit{Policy gradient} methods can be interpreted as gradient descent applied to the policy optimization problem in RL~\cite[Ch.~13]{sutton2018reinforcement}, see in particular~\cite[p.~337]{sutton2018reinforcement} for historical remarks. Therefore, when the state space is large~\citep{ref:bottou2010large} and due to necessary approximation, one oftentimes resorts to stochastic variants, whose convergence guarantees are often stated in terms of expectations~\citep{lan2023policy}, with the exception of~\citep{ding2021beyond}, see also~\cite[\S~2.2]{madden2020high}. The high-probability convergence rate provided in~\cite{ding2021beyond} can be seen as a loosened bound of the \textit{large deviation rate} we aim to derive. The pursuit of a tight high-probability concentration bound for the iterations produced by stochastic policy gradient methods holds significant potential for bolstering the practical implementation and interpretability of policy gradient methods, thereby improving the overall applicability of RL.
Studying the concentration behavior of SGD iterates through the lens of large deviations is pioneered by 
\cite{bajovic2022large}. It is worth mentioning that, however, their analysis exploits strong convexity of the optimization objective. Then, a first step towards understanding more precisely the concentration behavior of stochastic policy gradient iterates would be to study its entropy-regularized variant, which already faces the difficulties of non-convexity and non-uniformity of the PL constant.


As the theory of \textit{large deviations} is the key tool in this paper, we briefly introduce it already at this stage. Let $X_t\in \mathbb{R}^d$ be the $t^{\mathrm{th}}$ iterate of some stochastic algorithm aimed at driving $X_t$ to $x^{\star}$ for $t\to +\infty$, then, many aforementioned works provide statistical guarantees of the form $\mathbb{P}(\|X_t-x^{\star}\|_2\leq \varepsilon)\geq 1 - \beta$ $\forall t\geq T$ for an appropriately chosen triple $(T,\varepsilon,\beta)$. 
Although often impressive pieces of work, there are some remarks to be made. First, it is often not clear if these high-probability bounds are \textit{tight}. 
Secondly, the discrepancy is often measured by a sufficiently simple function, like the $\ell_2$-norm in this case, however, in practice one might be interested in vastly different sublevel sets,~\textit{e.g.}, sublevel sets capturing harmful events, the ones that are of interest in safe RL. Third, the guarantees are oftentimes states for particular policy parametrizations, a unifying framework that generalizes performance analysis across different policy parametrizations remains largely unexplored.
The theory of \textit{large deviations} is precisely powerful when one aims to address these aforementioned points. 
Let us clarify terminology and provide intuition, yet, while skipping some details. A sequence of finite (probability) measures $(\mu_t)_t$ on $\mathbb{R}^d$ is said to satisfy a \textit{large deviation principle} (LDP) with \textit{rate function} $I:\mathbb{R}^d\to [0,+\infty]$ when for any Borel set $\Theta\subseteq \mathbb{R}^d$ 
\begin{align}
\label{equ:LDP:def}
    -\mathring{r}:=-\inf_{\theta\in \mathring{\Theta}}I(\theta) \leq \liminf_{t\to +\infty} \frac{1}{t}\log \mu_t({\Theta})\leq \limsup_{t\to +\infty} \frac{1}{t}\log \mu_t({\Theta})\leq -\inf_{\theta\in \overline{\Theta}}I(\theta)=: -\overline{r},
\end{align}
where $\mathring{\Theta}$ denotes the interior of~$\Theta$, and $\overline{\Theta}$ denotes the closure of~$\Theta.$
Now, identifying $(\mu_t)_t$ with a sequence of random variables $(X_t)_t$, we have after rearranging~\eqref{equ:LDP:def} that 
$\mathrm{exp}(-\mathring{r}t+o(t)) \leq \mathbb{P}(X_t\in \Theta) \leq \mathrm{exp}(-\overline{r}t+o(t)).$
As such, an LDP captures---possibly tight,~\textit{e.g.}, under mild topological assumptions---convergence rates.
Indeed, regarding applicability, once the rate function is identified, the corresponding inaccuracy rate can be obtained for \textit{any} new given region of interest. For a detailed exposition of large deviations, we refer to an overview paper by~\cite{rs2008special} and the book by~\cite{dembo2011large}. As alluded to above, we will use the theory of large deviations to study stochastic policy gradient iterates, that is, to study the iterates of a non-convex optimization algorithm. Although large deviation tools appeared in seminal work on non-convex optimization~\cite{ref:ghadimi2013stochastic}, that work does not provide an LDP. What is more, large deviations are intimately connected to rare events, whose analysis is of interest to RL~\citep{frank2008reinforcement}. Also, efficient adaptive sampling techniques become available thanks to careful deployment of large deviation theory~\citep{dupuis2004importance}, an approach that is recently attracting interest in the context of SGD~\citep{lahire2023importance}.

\paragraph{Contributions.} 
\begin{enumerate}[label=(\roman*)]
    \item We find, with high probability, an {lower} bound on the rate function for iterates generated by softmax policy gradient with an entropy regularized objective~\eqref{expr:PG:maxent:update}, see Theorem~\ref{thm:LDP:bound}. We also recover results similar to~\cite{madden2020high}, yet, via large deviation theory, see Lemma~\ref{lem:LDP:up}.
    \item \label{item:2} We demonstrate the wide applicability of having established a large deviation rate by leveraging the contraction principle to establish large deviation rates for a wide range of tabular policy parametrizations beyond the softmax parametrization. 
    
    \item Effectively, we establish a LDP upper bound for SGD under a PL condition, which is of independent interest.   
\end{enumerate}
\vspace{-5pt}
In Section~\ref{sec:problem:statement} we describe the RL setting under consideration whereas in Section~\ref{sec:prelim} we derive a preliminary exponential bound on the convergence of the value function in probability. Then, in Section~\ref{sec:LDP} we provide our main result: a LDP upper bound for the stochastic policy gradient iterates. At last, we show in Section~\ref{sec:ramifications} that this LDP upper bound can be lifted from a softmax parametrization to a whole family of parametrizations, which directly leads to exponential convergence rates for existing and new policy parametrizations, 
with high probability.

\section{Problem statement.}
\label{sec:problem:statement}
\paragraph{Markov decision process (MDP).} We consider a finite MDP given by a six-tuple $\left(\mathcal{S}, \mathcal{A}, P, c, \gamma, \rho\right)$ consisting of a finite state space $\mathcal{S}=\{1,\ldots,S\}$, a finite action space $\mathcal{A}=\{1,\ldots,A\}$, a transition kernel $P: \mathcal{S} \times \mathcal{A} \rightarrow \mc P(\mc S)$, a cost-per-stage function $c: \mathcal{S} \times \mathcal{A} \rightarrow \mathbb{R}$, assumed to be bounded\footnote{One can set the cost of a particular stage to be $\infty$ but that results in an infeasible/unbounded optimization problem.}, a discount factor $\gamma\in(0,1)$,
and an initial distribution $\rho\in\mc P(\mathcal{S})$. Here, we use
$\mc P(\mc S):=\{p \in \mathbb{R}^{|\mathcal{S}|} : \ \sum_{i=1}^{|\mathcal{S}|} p_i=1, p \geq 0\}$ to denote the probability simplex over~$\mathcal{S}$. 
Throughout the rest of the paper we restrict attention to stationary policies, which are described by a stochastic kernel $\pi\in\Pi:=\mc P(\mc A)^{|\mathcal{S}|}$.
The value function $V^\pi: \mc P(\mc S) \rightarrow \mathbb{R}$ associated with $\pi$ is defined through
\begin{align}
\label{equ:V:pi:s}
& V^\pi(\rho):=\mathbb{E}\big[\textstyle\sum_{k=0}^{\infty} \gamma^kc\left(s_k, a_k\right)| s_0\sim \rho, a_k \sim \pi\left(\cdot | s_k\right), s_{k+1} \sim P(\cdot | s_k, a_k)\big],
\end{align}
and the main objective is to minimize $V^\pi(\rho)$ across all $\pi\in\Pi.$

\paragraph{Entropy-regularized reinforcement learning (RL).} 
It is sometimes useful to work with the modified objective where a regularizer is added, that is,
$
V^\pi_\tau(\rho):=V^\pi(\rho)+\tau \cdot \mb H(\rho, \pi)
$~\citep{ref:neu2017unified,mei2020global}, where $\tau>0$ and
$$
\mb H(\rho, \pi):={\mathbb{E}}\big[\textstyle\sum_{k=0}^{\infty}-\gamma^k \log \pi\left(a_k | s_k\right)| s_0 \sim \rho, a_k \sim \pi\left(\cdot | s_k\right), s_{k+1} \sim P\left(\cdot | s_k, a_k\right)\big].
$$
An optimal policy $\pi^\star\in\Pi$ is defined through the condition $V_\tau^{\pi^\star}(
\rho)\le V_\tau^{\pi}(\rho)$ for all $\pi\in\Pi.$

\paragraph{Softmax policy gradient for entropy-regularized objective.} The softmax transform $\pi_\theta(\cdot | s):=$ $\operatorname{softmax}(\theta(s, \cdot))$ of any function $\theta: \mathcal{S} \times \mathcal{A} \rightarrow$ $\mathbb{R}$ is defined through
\begin{equation}
\label{equ:softmax}
\pi_\theta(a | s)=\frac{\mathrm{exp}\left({\theta(s, a)}\right)}{\sum_{a'\in \mathcal{A}} \mathrm{exp}\left({\theta\left(s, a'\right)}\right)} \quad \forall a \in \mathcal{A}.
\end{equation}
In the remainder, we frequently think of $\theta$ as a policy parameter in $\mathbb{R}^{d=|\mathcal{S}||\mathcal{A}|}$ and we use $V^\theta_\tau(\rho)$ for $V^{\pi_\theta}_{\tau}(\rho)$. By~\citep[Thm. 3]{nachum2017bridging} there exists $\theta^\star\in\mb R^d$ such that $\pi_{\theta^\star}$ is the optimal policy of the entropy-regularized MDP.
By the policy gradient theorem~\citep{sutton1999policy}, the policy gradient $g(\theta_t):=\partial V^{\theta}_\tau(\rho)/\partial \theta |_{\theta=\theta_t} $ is given by
$\partial V^{\theta}_\tau(\rho)/\partial \theta(s, a)=1/(1-\gamma) \cdot d_\rho^{\theta}(s) \cdot \pi_\theta(a | s) \cdot A^{\theta}_\tau(s, a)$
, where $d_\rho^{\theta}$ denotes the discounted state-visitation frequency measure, and $A^{\theta}_\tau$ is the advantage function associated with the regularized objective, see also~\cite[Lem.~1]{mei2020global}. In practice, however, computing the advantage function is computationally expensive. 
It is therefore convenient to work with estimates of the exact gradient. Throughout the rest of the paper we  assume unbiased stochastic sample access to $\partial_{\theta} V^{\pi_\theta}_\tau(\rho)|_{\theta=\theta'}$, denoted as $\tilde g(\theta')$.
Stochastic softmax policy gradient with entropy-regularized objective thus suggests to update policy parameters via
\vspace{-5pt}
\begin{equation}\label{expr:PG:maxent:update}
\theta_{t+1} \leftarrow \theta_t-
\eta_t \cdot \tilde g(\theta_t).
\end{equation}
\vspace{-10pt}
\begin{assumption}[Warm start with sufficient exploration]
\label{ass:init}
    The initial state distribution satisfies $\allowbreak\min_{s\in\mc S} \rho(s) >0$. In addition, the initial iterate $\theta_1$ is chosen around a $\Delta$-neighborhood of $\theta^\star$ in the sense that {$\min_{\theta^\star\in\Theta^\star}\|\theta_1-\theta^\star\|_2\le \Delta$ where~$\Theta^\star$ is the set of all optimal solutions.}
\end{assumption}
 The above assumption 
 simplifies the exposition and such an initial $\theta_1$ can be obtained by a stochastic policy gradient method such as~\citep[Alg. 3.2]{ding2021beyond}. 
 We additionally define $Z_t := g(\theta_t)-\tilde g(\theta_t).$ As in~\citep{bajovic2022large} we make the following assumption. 
\begin{assumption}[Gradient estimation uncertainty]
\label{ass:noise:ZM}
    The stochastic process $(Z_t)_t$ satisfies: $(i)$ $Z_t$ depends on the past only through $\theta_t$; $(ii)$ $\mathbb{E}[Z_t|\theta_t=\theta]=0$ for any $\theta$ and $(iii)$ the distribution of $Z_t$ is independent of the iterate index $t\in \mathbb{N}$.
\end{assumption}
By Assumption~\ref{ass:noise:ZM}~$(i)$, the process $(\theta_t)_{t\ge0}$ generated by~\eqref{expr:PG:maxent:update} is a Markov chain.
We use $\Lambda(\lambda;\theta) = \log \mathbb{E}[\mathrm{exp}(\langle \lambda,Z_t \rangle)\mid \theta_t=\theta]$ to denote the conditional \textit{log-moment generating function} (LMGF) of $Z_t\in \mathbb{R}^d$. We also define $M(\nu;\theta) = \mathbb{E}[\mathrm{exp}(\nu \|Z_t\|_2^2)\mid \theta_t=\theta]$ as the \textit{conditional moment-generating function} (MGF) of $\|Z_t\|_2^2$. Now, we impose the following technical assumption to ensure that the tail probabilities of the disturbances $(Z_t)_t$ decay sufficiently fast. 

\begin{assumption}[Sub-Gaussian process]
\label{ass:subG}
    All elements of the sequence $(Z_t)_t$ follow a $\sigma$-sub-Gaussian distribution for some $\sigma>0$,~\textit{i.e.}, 
    \begin{equation}
\label{equ:LMGF:sG}
    \Lambda(\lambda;\theta)\leq \tfrac{1}{2}\sigma^2 \|\lambda\|_2^2   \quad \forall \lambda, \theta\in \mathbb{R}^d. 
\end{equation}
\end{assumption}
A Monte-Carlo gradient estimation method that satisfies the above assumptions is proposed by~\cite{ding2021beyond}. The method operates by generating trajectories of the MDP with policy parameter $\theta$ and then estimate the advantage function based on these trajectories. 
\vspace{-5pt}
\begin{lemma}[$L_1$-smoothness of the value function {\citep[Lem.~7 \& 14]{mei2020global}}]
\label{lemma:smooth:value}
There exists a constant $L_1>0$ such that
    \begin{equation}
    \label{equ:L1:upper}
    |V^{\pi_\theta}_\tau(\rho)-V^{\pi_{\theta'}}_\tau(\rho) - \langle g(\theta'),\theta-\theta'\rangle | \leq \tfrac{1}{2}{L_1} \|\theta-\theta'\|_2^2 \quad \forall \theta,\theta'\in \mb R^d.
\end{equation}
\end{lemma}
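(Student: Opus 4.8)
The plan is to prove the equivalent statement that the Hessian of $\theta\mapsto V^{\pi_\theta}_\tau(\rho)$ is uniformly bounded in spectral norm, since a uniform bound $\sup_\theta\|\nabla^2 V^{\pi_\theta}_\tau(\rho)\|_2\le L_1$ implies the claimed inequality through a second-order Taylor expansion with integral remainder. Because $\|\nabla^2 V^{\pi_\theta}_\tau(\rho)\|_2=\sup_{\|u\|_2=1}\big|\tfrac{d^2}{d\zeta^2}V^{\pi_{\theta+\zeta u}}_\tau(\rho)\big|_{\zeta=0}$, it suffices to bound the second directional derivative $\tfrac{d^2}{d\zeta^2}V^{\pi_{\theta+\zeta u}}_\tau(\rho)$ uniformly over all base points $\theta\in\mb R^d$, all unit vectors $u$, and all $\zeta$ near $0$.

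To expose the $\theta$-dependence cleanly I would write the regularized value in resolvent form. Let $P^\pi\in\mb R^{|\mc S|\times|\mc S|}$ be the state-transition matrix induced by $\pi$, and set $c^\pi(s)=\sum_a\pi(a|s)c(s,a)$ and $h^\pi(s)=-\sum_a\pi(a|s)\log\pi(a|s)$ for the per-state expected cost and entropy. Then
\[
V^{\pi}_\tau(\rho)=\rho^\top(I-\gamma P^\pi)^{-1}\big(c^\pi+\tau h^\pi\big).
\]
Substituting $\pi=\pi_{\theta+\zeta u}$ and differentiating in $\zeta$, I would use the resolvent identity $\tfrac{d}{d\zeta}(I-\gamma P)^{-1}=\gamma(I-\gamma P)^{-1}\dot P(I-\gamma P)^{-1}$ together with the product rule. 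The second derivative then splits into finitely many terms, each a product of at most three copies of the resolvent $(I-\gamma P^\pi)^{-1}$, together with at most two factors $\dot P^\pi$ or one factor $\ddot P^\pi$, and one of $c^\pi+\tau h^\pi$ or its $\zeta$-derivatives.

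The estimates rest on three ingredients. First, since $P^\pi$ is row-stochastic, $(I-\gamma P^\pi)^{-1}=\sum_{k\ge0}\gamma^kP^{\pi\,k}$ has nonnegative entries with constant row sums, so $\|(I-\gamma P^\pi)^{-1}\|_\infty=1/(1-\gamma)$; using $|\rho^\top x|\le\|\rho\|_1\|x\|_\infty=\|x\|_\infty$ and submultiplicativity of $\|\cdot\|_\infty$, each term reduces to an $\infty$-norm product. Second, the softmax Jacobian $\partial\pi_\theta(a|s)/\partial\theta(s,a')=\pi_\theta(a|s)(\indic{a=a'}-\pi_\theta(a'|s))$ and its directional derivatives along any unit $u$ are bounded by absolute constants times $\|u\|_2$, so $\dot P^\pi,\ddot P^\pi,\dot c^\pi,\ddot c^\pi$ are controlled by $\|c\|_\infty$ and absolute constants. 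Third, $\|c^\pi\|_\infty\le\|c\|_\infty$. Collecting these yields an $L_1$ that is a polynomial in $1/(1-\gamma)$, $\|c\|_\infty$, $\tau$, and $|\mc A|$.

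The main obstacle is the entropy contribution $h^\pi$ and its derivatives: the integrand $-\pi\log\pi$ is bounded on the simplex, but its derivatives carry factors $\log\pi_\theta(a|s)$ that diverge as $\pi_\theta(a|s)\to0$. The resolution is that each logarithmic factor is multiplied by a softmax derivative proportional to $\pi_\theta(a|s)$ itself, producing terms of the form $\pi_\theta\log\pi_\theta$ and $\pi_\theta(\log\pi_\theta)^2$, both of which extend continuously to $0$ at the boundary and are therefore uniformly bounded on the compact simplex. Verifying that every logarithmic singularity cancels in this way—so that $\dot h^\pi$ and $\ddot h^\pi$ stay uniformly bounded regardless of how small the softmax probabilities become—is the delicate step; once it is established, the term-by-term $\infty$-norm estimates above close the argument.
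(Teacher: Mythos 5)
Your proposal is correct, and it is essentially the argument behind the result the paper imports: the paper itself gives no proof, deferring entirely to \citep[Lem.~7 \& 14]{mei2020global}, whose proofs likewise bound the second directional derivative of $\theta\mapsto V^{\pi_\theta}_\tau(\rho)$ written in resolvent form $\rho^\top(I-\gamma P^{\pi_\theta})^{-1}(c^{\pi_\theta}+\tau h^{\pi_\theta})$, using row-stochasticity of $P^{\pi_\theta}$, the softmax Jacobian structure, and the cancellation of the $\log\pi_\theta$ singularities against factors of $\pi_\theta$ in the entropy term. Your identification of that cancellation (boundedness of $\pi\log\pi$-type terms on the simplex) as the delicate step matches exactly where the cited proof does its real work.
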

For the dependency of $L_1$ on the four-tuple $(c,\gamma,\tau,|\mathcal{A}|)$ see~\cite[Lem.~7 \& 14]{mei2020global}.

\section{Preliminaries.}
\label{sec:prelim}

\begin{lemma}[Non-uniform Polyak-Łojasiewicz condition]
\label{lem:nu:PL}
If Assumption~\ref{ass:init} holds, then $\left\|g(\theta)\right\|_2^2 \geq \mu(\theta)\big(V_\tau^\theta(\rho)-V_\tau^{\theta^{\star}}(\rho)\big),$ where $\mu(\theta)=2 \tau|\mathcal{S}|^{-1} \min _s \rho(s) \min _{s, a} \pi_\theta(a | s)^2\|d_\rho^{\pi_{\theta^{\star}}}/\rho\|_{\infty}^{-1}.$
Moreover, if Assumption~\ref{ass:noise:ZM} and~\ref{ass:subG} hold and $\eta_t\le \eta /(t+\sqrt{3 \sigma^2/(2 \epsilon \Delta)})$, for iterates generated by~\eqref{expr:PG:maxent:update}, we have 
$ \mu \!= \inf_{t\ge 1} \mu(\theta_t)\!>\!0$ with probability at least $1-\epsilon/6$.
\end{lemma}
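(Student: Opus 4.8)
The plan is to prove the two assertions separately, the first being the deterministic non-uniform PL inequality and the second the high-probability statement that the PL constant stays bounded away from zero along the whole trajectory. For the first assertion I would follow \citep{mei2020global}: starting from the policy-gradient expression $\partial V^\theta_\tau(\rho)/\partial\theta(s,a)=(1-\gamma)^{-1}d_\rho^{\theta}(s)\,\pi_\theta(a|s)\,A^\theta_\tau(s,a)$, I would lower-bound $\|g(\theta)\|_2$ by its inner product with a normalised direction aligned with the discrepancy between $\pi_\theta$ and the optimal policy $\pi_{\theta^\star}$, so that only the components carrying the (soft) advantage of the optimal action survive. Combining this with the performance-difference identity for the entropy-regularised objective, which writes the gap $V^\theta_\tau(\rho)-V^{\theta^\star}_\tau(\rho)$ as a $d_\rho^{\pi_{\theta^{\star}}}$-weighted average of regularised advantages, and absorbing the state-visitation mismatch through $\|d_\rho^{\pi_{\theta^{\star}}}/\rho\|_{\infty}$, yields exactly $\|g(\theta)\|_2^2\ge\mu(\theta)(V^\theta_\tau(\rho)-V^{\theta^\star}_\tau(\rho))$. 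Here the factor $2\tau|\mathcal S|^{-1}$ comes from the strong convexity induced by the entropy term, $\min_s\rho(s)>0$ from the exploration part of Assumption~\ref{ass:init}, $\|d_\rho^{\pi_{\theta^{\star}}}/\rho\|_{\infty}^{-1}$ from the visitation correction, and the only iterate-dependent factor $\min_{s,a}\pi_\theta(a|s)^2$ from the softmax Jacobian.

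For the second assertion the key observation is that in $\mu(\theta_t)$ every factor except $\min_{s,a}\pi_{\theta_t}(a|s)^2$ is a fixed positive constant, so it suffices to show $\inf_t\min_{s,a}\pi_{\theta_t}(a|s)>0$ with probability at least $1-\epsilon/6$. Since $\min_{s,a}\pi_\theta(a|s)\ge A^{-1}\mathrm{exp}(-2\|\theta\|_\infty)$ and $\theta^\star$ is fixed, this reduces to controlling $\sup_t\|\theta_t-\theta^\star\|_2$. I would decompose the update~\eqref{expr:PG:maxent:update} as $\theta_t-\theta^\star=(\theta_1-\theta^\star)-\sum_{s<t}\eta_s g(\theta_s)+\sum_{s<t}\eta_s Z_s$, using that the exact gradient is uniformly bounded, $\|g(\theta)\|_2\le G$, because the visitation and softmax factors are at most one and the product $\pi_\theta(a|s)A^\theta_\tau(s,a)$ stays bounded at the simplex boundary, and that $\sum_{s<t}\eta_s Z_s$ is a martingale since $\mathbb{E}[Z_s\mid\theta_s]=0$ by Assumption~\ref{ass:noise:ZM}. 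With the harmonic step sizes $\sum_s\eta_s^2<\infty$, so Assumption~\ref{ass:subG} lets a sub-Gaussian Doob/Ville-type maximal inequality bound $\sup_t\|\sum_{s<t}\eta_s Z_s\|_2$; the additive constant $\sqrt{3\sigma^2/(2\epsilon\Delta)}$ in $\eta_t$ is precisely what shrinks $\sum_s\eta_s^2$ enough to push this fluctuation beyond the chosen radius with probability at most $\epsilon/6$, given the warm start $\|\theta_1-\theta^\star\|_2\le\Delta$.

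The main obstacle is the circular dependence between boundedness of the iterates and positivity of the PL constant: keeping the drift $\sum_{s}\eta_s g(\theta_s)$ from accumulating relies on the descent guaranteed by smoothness (Lemma~\ref{lemma:smooth:value}) together with the inequality $\|g(\theta_t)\|_2^2\ge\mu(\theta_t)\,\delta_t$ for $\delta_t:=V^{\theta_t}_\tau(\rho)-V^{\theta^\star}_\tau(\rho)$, yet this requires $\mu(\theta_t)$ bounded below, which is exactly the claim. I would break the loop with a stopping-time induction: let $\kappa$ be the first time $\delta_t$ leaves a fixed sublevel set on which $\min_{s,a}\pi_{\theta_t}(a|s)$, and hence $\mu(\theta_t)$, is bounded below by some $\mu_0>0$. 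On $\{t<\kappa\}$ the smoothness-plus-PL recursion produces a supermartingale of the form $\delta_{t+1}\le(1-\eta_t\mu_0)\delta_t+\eta_t\langle g(\theta_t),Z_t\rangle+O(\eta_t^2)$, and applying the same maximal inequality to its martingale part shows $\kappa=\infty$ with probability at least $1-\epsilon/6$. On that event the iterates never leave the good region, so $\mu=\inf_{t\ge1}\mu(\theta_t)\ge\mu_0>0$, which is the assertion.
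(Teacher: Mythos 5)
Your proposal is correct in outline, but note that the paper's own proof is nothing more than two citations: the PL inequality is quoted from \citep[Lem.~15]{mei2020global}, and the high-probability positivity $\inf_{t\ge 1}\min_{s,a}\pi_{\theta_t}(a|s)>0$ is quoted from \citep[Lem.~6.4]{ding2021beyond}. What you have written is a from-scratch reconstruction of the content of those citations: your first paragraph retraces the PL argument of \cite{mei2020global} (softmax Jacobian factor $\min_{s,a}\pi_\theta(a|s)^2$, soft performance-difference identity, distribution-mismatch coefficient), and your stopping-time/supermartingale argument for the second claim is in the same spirit as the proof in \cite{ding2021beyond} that the iterates remain in a region where the policy stays away from the simplex boundary. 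Two steps in your reconstruction need more than assertion. First, the preliminary reduction to bounding $\sup_t\|\theta_t-\theta^\star\|_2$ via the drift-plus-martingale decomposition cannot stand on its own: with $\eta_t\sim\eta/t$ the crude drift bound $G\sum_s \eta_s$ (for a uniform gradient bound $G$) diverges, and indeed you abandon this route for the stopping-time argument, so it should be cut rather than presented as the main plan. Second, the hinge of your stopping-time construction---that a sublevel set $\{\delta_t\le c\}$ of the regularized sub-optimality forces $\min_{s,a}\pi_{\theta_t}(a|s)$ above a positive constant---is true but genuinely nontrivial: it requires the soft sub-optimality identity $\delta(\theta)=\tfrac{\tau}{1-\gamma}\sum_s d_\rho^{\pi_\theta}(s)\,\mathrm{KL}\big(\pi_\theta(\cdot|s)\,\|\,\pi_{\theta^\star}(\cdot|s)\big)$, the bound $d_\rho^{\pi_\theta}(s)\ge(1-\gamma)\rho(s)$ together with $\min_s\rho(s)>0$ from Assumption~\ref{ass:init}, and the observation that $\mathrm{KL}(\cdot\,\|\,\pi_{\theta^\star}(\cdot|s))$ is bounded away from zero on the boundary of the simplex, so that a small enough threshold $c$ (calibrated so the warm start yields $\delta_1\le L_1\Delta^2/2<c$) traps all per-state policies in a compact subset of the interior. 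With those two repairs your sketch is a legitimate self-contained substitute for the paper's proof; what the citation route buys is brevity and the exact constants (the step-size offset $\sqrt{3\sigma^2/(2\epsilon\Delta)}$ and the probability $1-\epsilon/6$ are inherited from \cite{ding2021beyond}), which your argument would still have to calibrate, while your route has the merit of making explicit where each assumption actually enters.
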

\begin{proof}
By \citep[Lem.~15]{mei2020global}, we have $\left\|g(\theta)\right\|_2^2 \geq \mu(\theta)\big(V_\tau^\theta(\rho)-V_\tau^{\theta^{\star}}(\rho)\big),$ where $\mu(\theta)=2 \tau|\mathcal{S}|^{-1} \min _s \rho(s) \min _{s, a} \pi_\theta(a | s)^2\|d_\rho^{\pi_{\theta^{\star}}}/\rho\|_{\infty}^{-1}.$ According to~\cite[Lem.~6.4]{ding2021beyond} we further have $\inf_{t\ge 1}\pi_{\theta_t}(a|s)>0$ with probability $1-\epsilon/6$, which concludes the proof.
\end{proof}
Lemma~\ref{lem:nu:PL} states that along a trajectory of~\eqref{expr:PG:maxent:update}, with high probability, we have that $\mu(\theta_t)>0$ for all $t$. In what follows 
we will exploit that $ \mu \!= \inf_{t\ge 1} \mu(\theta_t)$ is strictly positive with high probability.
Next, we derive elementary recursive inequalities and provide the first main result. 
If~$\eta_t$ satisfies $0<\eta_t\le 1/L_1$ 
for all $t=1,\ldots,T$, then~\eqref{expr:PG:maxent:update} implies that
\begin{align}
    V^{\theta_{t+1}}_\tau(\rho)
  &= V^{\theta_{t}-\eta_t g(\theta_t)+\eta_t Z_t}_\tau(\rho) \nonumber
  \\&\le V^{\theta_{t}}_\tau(\rho) - \eta_t \langle g(\theta_t), g(\theta_t)-Z_t\rangle + \tfrac{1}{2} {\eta_t^2 L_1}\|g(\theta_t)-Z_t\|_2^2 \nonumber 
  \\&\le  V^{\theta_{t}}_\tau(\rho) - \eta_t\| g(\theta_t)\|_2^2 +\eta_t \langle g(\theta_t),Z_t\rangle + {\eta_t^2 L_1}\big(\|g(\theta_t)\|_2^2+\|Z_t\|_2^2\big) \nonumber
  \\&\le V^{\theta_{t}}_\tau(\rho) +\mu \left( {\eta_t^2 L_1}-\eta_t \right)\big(V^{\theta_{t}}_\tau(\rho) -V_\tau^{\theta^{\star}}(\rho)\big) +\eta_t \langle g(\theta_t),Z_t\rangle + {\eta_t^2 L_1}\|Z_t\|_2^2, \nonumber
\end{align}
where the first inequality follows from the $L_1$-smoothness of the value function, the second inequality exploits that $\|x-y\|_2^2\leq 2\|x\|_2^2+2\|y\|_2^2$ for all $x,y\in \mathbb{R}^d$, and the last inequality holds because $\eta_t\le 1/L_1$ and the PL condition from Lemma~\ref{lem:nu:PL}.
Subtracting $V_\tau^{\theta^{\star}}(\rho)$ from both sides yields
\begin{align}\label{recursion:start}
    &V^{\theta_{t+1}}_\tau(\rho) - V_\tau^{\theta^{\star}}(\rho) 
    \le (1-\mu\eta_t +\mu \eta_t^2 L_1)(V^{\theta_{t}}_\tau(\rho) -V_\tau^{\theta^{\star}}(\rho)) +\eta_t \langle g(\theta_t),Z_t\rangle + {\eta_t^2 L_1}\|Z_t\|_2^2. 
\end{align}
Also, observe that since\footnote{To show that $\mu\le L_1$, one exploits that $L_1$-smoothness implies Lipschitz continuity of the gradient and recalls that the PL condition implies a quadratic growth condition~\citep[App.~A]{ref:karimi2016linear}.} $\eta_t\leq 1/L_1$, $\mu\leq L_1$  and $\eta_t,L_1>0$, we have $\mu \eta_t (\eta_t L_1-1)\in (-1,0]$ and thus $(1-\mu\eta_t +\mu \eta_t^2 L_1)\in (0,1]$ for $t=1,\dots, T$. 
Next, by substituting the points $\theta'=\theta-1/L_1\cdot g(\theta)$ and $\theta$ into~\eqref{equ:L1:upper}, we obtain
$V^{\theta-1/L_1\cdot g(\theta)}_\tau(\rho) \le V^{\theta}_\tau(\rho) - \tfrac{1}{2}L_1^{-1} \|g(\theta)\|_2^2$.
Using the optimality of $\theta^\star$, we may then conclude that $V_\tau^{\theta^{\star}}(\rho)\le V^{\theta-1/L_1\cdot g(\theta)}_\tau(\rho) \le V^{\theta}_\tau(\rho) - \tfrac{1}{2}L_1^{-1} \|g(\theta)\|_2^2$ which implies that
\begin{align}\label{gradient:smooth}
    \|g(\theta)\|_2^2&\le 2L_1(V^{\theta}_\tau(\rho)-V^{\theta^\star}_\tau(\rho)) \quad \forall \theta\in \mathbb{R}^d.
\end{align}
\vspace{-15pt}
\begin{lemma}[Exponential upper bound]
\label{lem:LDP:up}
Suppose Assumptions~\ref{ass:init},~\ref{ass:noise:ZM}, and \ref{ass:subG} hold, and choose $T>1$, $\epsilon\in (0,1)$. Then, there is a universal constant $C>0$ such that the following event holds with probability at least $1-\epsilon/6$. Let $C_{\mathsf{M}}=(\sigma \sqrt{|\mathcal{S}| |\mathcal{A}|} C)^2$ where $\sigma$ is as in~\eqref{equ:LMGF:sG},  and set $\eta>0$ such that $(\mu\eta-1) > \sigma^2/ C_{\mathsf{M}}$. Let $\eta_t=\eta/(t+t_0+1)$ for all $t=1,\ldots,T$ 
    and choose $t_0$ and $K$ such that
    \begin{align}
       \nonumber t_0 \geq & \max\left\{\frac{\eta^2 L_1}{(\mu\eta-1) - B_0 C_0 \eta^2}-1,L_1 \eta - 2,\sqrt{\frac{3 \sigma^2}{2 \epsilon \Delta}}-1 \right\},\\
       \label{equ:K} K \geq & \max_{t=1,\dots,T} \left\{B_0^{-1}, (t_0+1)\big(V^{\theta_{1}}_\tau(\rho) - V_\tau^{\theta^{\star}}(\rho) \big), \frac{2c_t C_{\mathsf{M}}}{1-(a_t+B_0C_0 b_t^2)} \right\}, 
    \end{align}
    where $B_0=1/(2 \eta^2 L_1 C_{\mathsf{M}})$, $C_0=2L_1 \sigma^2$, 
    \begin{align}
    \label{equ:abc}
a_t =\frac{t+t_0+1}{t+t_0}\left(1-\mu\eta_t +\mu \eta_t^2 L_1\right) , \
b_t  =\frac{\eta}{\sqrt{t+t_0}}, \ \text{and} \
c_t  =\frac{\eta^2 L_1}{t+t_0+1}.
\end{align}
Then, for any $\delta\geq 0$ and $t=1,\dots, T$ we have that 
        $
        \mb P\left(\big(V^{\theta_{t+1}}_\tau(\rho) - V_\tau^{\theta^{\star}}(\rho)\big) \ge \delta\right) \le e^{1-(t+t_0+1)\delta/K}.
        $
\end{lemma}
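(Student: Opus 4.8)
The plan is to prove the tail bound by establishing an exponential supermartingale-type estimate on the error sequence $\Xi_t := V^{\theta_t}_\tau(\rho) - V_\tau^{\theta^\star}(\rho)$, starting from the one-step recursion~\eqref{recursion:start}. I would condition on the high-probability event from Lemma~\ref{lem:nu:PL} (giving $\mu>0$), which costs the stated $\epsilon/6$ in probability, and then work on that event with a deterministic PL constant $\mu$. The key observation is that~\eqref{recursion:start} writes $\Xi_{t+1}$ as a contraction factor $(1-\mu\eta_t+\mu\eta_t^2 L_1)$ times $\Xi_t$, plus two noise terms: a martingale-difference-like inner product $\eta_t\langle g(\theta_t),Z_t\rangle$ and a nonnegative quadratic term $\eta_t^2 L_1\|Z_t\|_2^2$. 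The coefficients $a_t,b_t,c_t$ in~\eqref{equ:abc} are precisely designed so that, after multiplying~\eqref{recursion:start} through by a rescaling factor $(t+t_0+1)/(t+t_0)$, the recursion reads
\begin{align}
\frac{t+t_0+1}{t+t_0}\,\Xi_{t+1} \le a_t\,\Xi_t + b_t'\langle g(\theta_t),Z_t\rangle + c_t'\|Z_t\|_2^2, \nonumber
\end{align}
with the gradient term controlled via~\eqref{gradient:smooth} so that $\|g(\theta_t)\|_2^2 \le 2L_1\Xi_t$.

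The central step is a moment-generating-function argument run backward in time. I would define a normalized potential, roughly $W_t := (t+t_0+1)\Xi_t/K$, and seek to show by downward induction on $t$ that $\mathbb{E}[\exp(\lambda W_{t+1})]$ is controlled, culminating in a bound of the form $\mathbb{E}[\exp(W_{t+1})]\le e$, after which Markov's inequality yields $\mathbb{P}(\Xi_{t+1}\ge\delta)\le e^{1-(t+t_0+1)\delta/K}$ directly. To push the MGF through one step, I would take the conditional expectation given $\theta_t$ and use Assumptions~\ref{ass:noise:ZM} and~\ref{ass:subG}: the sub-Gaussian bound~\eqref{equ:LMGF:sG} handles the $\langle g(\theta_t),Z_t\rangle$ term (producing a $\tfrac12\sigma^2\|\text{coeff}\cdot g(\theta_t)\|_2^2$ contribution, itself re-bounded by $2L_1\Xi_t$ via~\eqref{gradient:smooth}), while the quadratic term $\|Z_t\|_2^2$ is handled by the MGF $M(\nu;\theta)$ of $\|Z_t\|_2^2$, whose finiteness and sub-Gaussian-induced bound require the constant $C_{\mathsf M}=(\sigma\sqrt{|\mathcal S||\mathcal A|}C)^2$ and the threshold condition $(\mu\eta-1)>\sigma^2/C_{\mathsf M}$. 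The roles of $B_0$, $C_0$, and the three lower bounds defining $t_0$ in~\eqref{equ:K} are exactly to guarantee that the effective one-step contraction $a_t + B_0 C_0 b_t^2 < 1$ and that the residual additive noise is absorbed into the factor $K$ through the constraint $K\ge 2c_t C_{\mathsf M}/(1-(a_t+B_0C_0b_t^2))$.

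Concretely, the induction hypothesis I would carry is that $\mathbb{E}[\exp(B_0 W_t)]\le e$ for the appropriate normalization (the condition $K\ge B_0^{-1}$ and $K\ge(t_0+1)\Xi_1$ ensure the base case and initialization hold). Propagating it forward, I would bound
\begin{align}
\mathbb{E}\big[\exp(B_0 W_{t+1})\mid\theta_t\big] \le \exp\!\big(B_0 a_t W_t + B_0 C_0 b_t^2 B_0 W_t + B_0 c_t C_{\mathsf M}/K\big), \nonumber
\end{align}
so that the combined multiplicative factor on $W_t$ is $a_t+B_0C_0b_t^2<1$ and the additive slack is at most the gap that $K$ was chosen to cover; taking total expectation and invoking the hypothesis closes the step. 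The main obstacle I anticipate is the bookkeeping needed to show that the MGF of $\|Z_t\|_2^2$ is genuinely controlled by the sub-Gaussian assumption~\eqref{equ:LMGF:sG} with the claimed constant $C_{\mathsf M}$ — turning a coordinate-wise or norm sub-Gaussian tail into a usable exponential bound on the squared norm typically requires a dimension-dependent universal constant $C$ and a restriction on the multiplier $\nu$, and verifying that the chosen step-size schedule keeps $\nu$ within the admissible range at every $t$ is the delicate part. Getting the contraction-versus-noise ledger to balance uniformly over $t=1,\dots,T$, rather than merely asymptotically, is where the carefully engineered definitions of $t_0$ and $K$ must all be used simultaneously.
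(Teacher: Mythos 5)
Your proposal is essentially the paper's own proof: the paper defines the rescaled error $Y_t=(t+t_0)\bigl(V^{\theta_t}_\tau(\rho)-V_\tau^{\theta^\star}(\rho)\bigr)$, derives the recursion $Y_{t+1}\le a_tY_t+b_t\sqrt{t+t_0}\,\langle g(\theta_t),Z_t\rangle+c_t\|Z_t\|_2^2$, runs exactly your forward induction to maintain the MGF invariant $\Phi_t(\nu)\le\exp(\nu K)$ for $\nu\in[0,1/K]$ (with $K\ge B_0^{-1}$, the initialization bound, and $K\ge 2c_tC_{\mathsf{M}}/(1-(a_t+B_0C_0b_t^2))$ playing the same roles you assign them), and finishes with the Markov/Chernoff step you describe. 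The only detail you gloss over is how the two correlated noise terms are decoupled inside the conditional expectation --- the paper uses H\"older's inequality (hence factors $2b_t\nu$, $2c_t\nu$ and square roots) together with a dimension-dependent bound on $M(2c_t\nu;\theta)$ valid for $\nu\le B_0$, which is precisely the ``delicate part'' you correctly flagged.
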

The proof is inspired by~\citep{bajovic2022large}, with the difference being that they work with strongly convex objective functions. Unfortunately, $V^\theta_\tau(\rho)$ fails to be convex in~$\theta.$
\begin{proof}
 Define an auxiliary process $Y_{t} = (t+t_0)(V^{\theta_{t}}_\tau(\rho) - V_\tau^{\theta^{\star}}(\rho) ).$ Then, the recursion~\eqref{recursion:start} implies that
    $Y_{t+1} \le a_t Y_t + b_t\sqrt{t+t_0}\langle g(\theta_t),Z_t\rangle + c_t\|Z_t\|^2,$
where $a_t$, $b_t$ and $c_t$ are as in~\eqref{equ:abc}.
Defining the MGFs associated with $Y_t$ as $\Phi_t(\nu)  =\mathbb{E}\left[\exp \left(\nu Y_t\right)\right]$ and $\Phi_{t+1 | t}\left(\nu ; \theta_t\right)  =\mathbb{E}\left[\exp \left(\nu Y_{t+1}\right) \mid \theta_t\right]$ for $\nu \in \mathbb{R}$, the recursion~\eqref{recursion:start} further implies that \vspace{-5pt}
\begin{align*}
 \Phi_{t+1 | t}\left(\nu ; \theta_t\right)
& \!\leq \exp \left(a_t \nu Y_t\right)\mathbb{E}\left[\exp \left(2 b_t \nu \sqrt{t+t_0}\langle g(\theta_t),Z_t\rangle\right) \mid \theta_t\right]^{1/2}
\mathbb{E}\!\left[\exp \big(2 c_t \nu\left\|Z_t\right\|_2^2\big) \mid \theta_t\right]^{1/2} \\
& \leq \exp \left(a_t \nu Y_t\right) \exp \left(\sigma^2 b_t^2 \nu^2 (t+t_0) \|g(\theta_t)\|_2^2\right)^{1/2}\mathbb{E}\left[\exp \big(2 c_t \nu\left\|Z_t\right\|_2^2\big) \mid \theta_t\right]^{1 / 2}\\ 
& \leq \exp \left(a_t \nu Y_t\right) \exp \left(2L_1 \sigma^2 b_t^2 \nu^2 Y_t\right)^{1/2}\mathbb{E}\left[\exp \big(2 c_t \nu\left\|Z_t\right\|_2^2\big) \mid \theta_t\right]^{1 / 2}, 
\end{align*}
where the first inequality follows from \textit{H\"older's inequality}, the second inequality follows from~\eqref{equ:LMGF:sG} and the last inequality follows from~\eqref{gradient:smooth}. Recall that $d=\mathrm{dim}(Z_t)$ and define $B_0=1/(2\eta^2 L_1(\sigma \sqrt{d} C)^2)$.
Then, there is a constant $C>0$ such that for all $\nu \in 
[0, B_0]$, we have by monotonicity of $c_t$, H\"older's inequality for $p=1/(\nu 2 \eta^2 L_1 (\sigma \sqrt{d} C)^2)\geq 1$ and~\citep[Lem.~2]{ref:jin2019short}
that $M(2c_t\nu;\theta)\leq \mathrm{exp}(2c_t \nu (\sigma \sqrt{d} C)^2)$. Recall that $(\sigma \sqrt{d} C)^2=C_{\mathsf{M}}$ because $d=|\mc S||\mc A|$.  
Moreover, as $\mathrm{exp}(x)\geq \mathrm{exp}(x)^{1/2}$ for $x\geq 0$, 
we can simplify the above inequality to 
\begin{equation}
    \label{equ:phi:rec}
    \begin{aligned}
    \Phi_{t+1 | t}\left(\nu ; \theta_t\right) &\leq \exp \left(a_t \nu Y_t\right) \exp \left(2L_1 \sigma^2 b_t^2 \nu^2 Y_t\right)\mathrm{exp}(2c_t \nu C_{\mathsf{M}})\\
    & 
    = \mathrm{exp}\left(\nu\left(a_t+\nu C_0 b_t^2 \right)Y_t \right)\mathrm{exp}(2c_t \nu C_{\mathsf{M}})\quad \forall\nu\leq B_0
\end{aligned}
\end{equation}
for $C_0 = 2L_1 \sigma^2$. Taking expectations on both sides of~\eqref{equ:phi:rec} yields
\begin{equation}
    \label{equ:phi:rec:2}
    \Phi_{t+1}(\nu) \leq \Phi_t\left(\nu(a_t+B_0 C_0 b_t^2 )\right)\mathrm{exp}(2c_t \nu C_{\mathsf{M}}) \quad\forall \nu\le B_0. 
\end{equation}
We aim to show that $\Phi_{t+1}(\nu)\leq \mathrm{exp}(\nu K)$ for $t=1,\dots, T$ by studying the magnitude of $(a_t+B_0 C_0 b_t^2 )$ and using induction.
First, it can be shown that $a_t<1$ for all $t\geq 1$. To do so, rewrite $a_t$ as
\begin{equation}
\label{equ:at}
    a_t = 1 - \frac{\mu\eta -1}{t+t_0}\left(1- \frac{\mu \eta^2 L_1}{(\mu \eta - 1)(t+t_0+1)} \right),
\end{equation}
where we exploit the identity $1+1/s=(s+1)/s$ for $s\neq 0$. We have that $(\mu \eta - 1)>0$. Now since $t_0\geq L_1 \eta - 2$ by assumption, we have $\eta_t\le 1/L_1$. Recall as well the fact that $\mu \leq L_1$, which leads to $\eta_t \leq 1/\mu$ and thus $\mu \eta_t \leq 1$, that is, $\mu \eta/(t+t_0+1)\leq 1$. Exactly this implies that $(\mu\eta -1)/(t+t_0)\in (0,1]$. This covers the first non-trivial fraction of~\eqref{equ:at}. Now for the second part, we have the implication that 
\begin{equation*}
    t_0 +1 \geq \frac{\mu\eta^2 L_1}{\mu \eta -1}\implies \frac{\mu\eta^2 L_1}{(\mu \eta - 1)(t+t_0+1)}\in (0,1], 
\end{equation*}
from where we can conclude that $a_t \in [0,1)$ for any finite $t\geq 1$. However, we will need $(a_t+B_0 C_0 b_t^2)<1$. As~$b_t$ decays with~$t_0$, this can be achieved by selecting a sufficiently large $t_0$. 
Explicitly, let $\mu \eta = 1 + \varepsilon$ for some $\varepsilon>0$.
By the definition of~$b_t$ and $a_t$ as in~\eqref{equ:at} we then obtain
\begin{equation}
\label{equ:at:2}
    a_t+B_0C_0b_t^2 = 1 - \frac{\varepsilon}{t+t_0}\left(1-\frac{B_0C_0\eta^2}{\varepsilon}- \frac{\mu\eta^2 L_1}{\varepsilon(t+t_0+1)} \right).
\end{equation}
By assumption, we have $\mu\eta-1>\sigma^2/C_{\mathsf{M}}.$ This implies that $\varepsilon>B_0 C_0 \eta^2.$
It then follows that $a_t+B_0C_0 b_t^2\in [0,1)$ for any finite $t\geq 1$ and 
  $  t_0 +1\geq \mu\eta^2 L_1/(\varepsilon - B_0 C_0 \eta^2).$
Recall also that $t_0\geq L_1 \eta - 2$ by assumption. Thus, we have $\eta_t\le 1/L_1$.
Assumption~\ref{ass:init} allows then for the following inductive procedure. Let $\nu\in [0,1/K]$ for $K$ as in~\eqref{equ:K}
then, for $t=1$, we have $\Phi_1(\nu)\leq \mathrm{exp}(\nu K)$ since $K\geq t_0 \left(V^{\theta_{1}}_\tau(\rho) - V_\tau^{\theta^{\star}}(\rho) \right)$. Now suppose $\Phi_t(\nu)\leq \mathrm{exp}(\nu K)$ for some arbitrary $t\geq 1$, then, we have by the inductive assumption that 
    $\Phi_{t+1}\leq \mathrm{exp}(2 c_t C_{\mathsf{M}}\nu)\mathrm{exp}(\nu(a_t+B_0 C_0 b_t^2)K). $  
However, since for $t=1,\dots, T$ we also have that
   $ K\geq 2c_t C_{\mathsf{M}}/(1-(a_t+B_0C_0 b_t^2)),$
which is well-defined since $(a_t+B_0 C_0 b_t^2)<1$, by our selection of $t_0$ and $\eta$, it follows that $\Phi_{t+1}(\nu)\leq \mathrm{exp}(\nu K)$ for $t=1,\dots, T$.
By~\cite[Claim~A.7]{ref:harvey2019tight}, if a random variable $X\in \mathbb{R}$ satisfies $\mathbb{E}[\mathrm{exp}(\lambda X)]\leq C_1 \mathrm{exp}(\lambda C_2)$ for all $\lambda \leq 1/C_2$ and some universal constants $C_1$ and $C_2$, then $\mathbb{P}(X \geq C_2 \log(1/\delta))\leq C_1 e \delta$. 
Hence, after applying~\cite[Claim~A.7] {ref:harvey2019tight} one finds that 
    $\mathbb{P}(Y_t\geq C_3)\leq \mathrm{exp}(1-C_3/K), $
 for any choice of $C_3\geq 0$ and $t=1,\dots, T$. The proof is concluded by substituting the expression for $Y_t$ into the previous inequality and a union bound. 
\end{proof}
\noindent We note that by the expressions for $(a_t,b_t,c_t)$ and~\eqref{equ:at:2} it follows that $\lim_{t\to+\infty}2c_t C_{\mathsf{M}}/(1-(a_t+B_0C_0 b_t^2))$ is bounded, that is, one can freely select $T$, which we will exploit later in Theorem~\ref{lem:cumulant:LMGF}.



\section{Large deviations.}
\label{sec:LDP}

To provide our main result, we need to impose another assumption on the random variables $(Z_t)_t$. 
\begin{assumption}[Conditional LMGF regularity]
\label{ass:CLMGF:reg}
    There is a $L_{\Lambda}\geq 0$ such that
    \begin{equation*}
        |\Lambda(\lambda;\theta_1)-\Lambda(\lambda;\theta_2)|\leq L_{\Lambda}\|\lambda\|_2^2 \|\theta_1 - \theta_2\|_2\quad \forall (\lambda,\theta_1,\theta_2)\in \mathbb{R}^d\times \mathbb{R}^d \times \mathbb{R}^d.
    \end{equation*}
\end{assumption}
We refer to~\cite[p.~5]{bajovic2022large} for a discussion of this assumption. For instance, when the conditional distribution of $Z_t$ given $\theta_t=\theta$ is Gaussian, and its covariance matrix is Lipschitz continuous in $\theta$, then Assumption~\ref{ass:CLMGF:reg} holds. In particular, if the covariance is independent of $\theta$, this trivially holds true. Next, we define the MGF of $\theta_t$ as $\mathbb{R}^d \ni \lambda \mapsto\Gamma_t(\lambda):=\mathbb{E}\left[\mathrm{exp}(\langle \lambda, \theta_t - \theta^{\star}\rangle) \right]$ and similarly the LMGF as $\mathbb{R}^d\ni \lambda \mapsto \log \Gamma_t (\lambda)$. Now we are equipped to provide our main technical result, leading directly to a large deviation principle (LDP) upper bound. Note that $\Gamma_t(\lambda)$ depends on the optimal solution $\theta^\star$ which is fixed and unknown.

\begin{theorem}[Limiting LMGF]
\label{lem:cumulant:LMGF}
    Suppose that Assumptions~\ref{ass:init},~\ref{ass:noise:ZM},
    ~\ref{ass:subG} and~\ref{ass:CLMGF:reg} hold and set $\eta_t=\eta/(t+t_0+1)$ as in Lemma~\ref{lem:LDP:up}. Then, conditioning on the event that $ \mu \!= \inf_{t\ge 1} \mu(\theta_t)\!>\!0$, we have that
    \begin{equation}\label{limit:LMGF:upper}
    \limsup_{t\to +\infty} \frac{1}{t}\log \Gamma_{t}(t\lambda) \leq r(\lambda)  +\int_0^1 \Lambda\left(\eta Q D(x) Q^{\top} \lambda ; \theta^{\star}\right) \mathrm{d} x=:\Psi(\lambda), 
    \end{equation}
    where $r(\lambda)=O(\|\lambda\|^3_2)$ with its expression presented in the proof below, and $Q$ and $D(x)$ are such that
    $H(\theta^{\star})=QDQ^\top$, $QQ^\top=I_d$,
    $D=\mathrm{diag}(\rho_1,\dots,\rho_n)$ being the diagonalization of $H(\theta^{\star})$,
    and $D(x)=\mathrm{diag}(x^{\eta \rho_1 -1},\ldots,x^{\eta \rho_n -1})$.
\end{theorem}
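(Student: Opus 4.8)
The plan is to linearize the stochastic recursion around $\theta^\star$, track the moment generating function $\Gamma_t$ through an explicit one-step recursion, and then extract the limiting LMGF by unrolling and recognizing a Riemann sum. Writing $u_t = \theta_t - \theta^\star$, the update~\eqref{expr:PG:maxent:update} gives $u_{t+1} = u_t - \eta_t g(\theta_t) + \eta_t Z_t$. Since $g(\theta^\star)=0$, a second-order Taylor expansion yields $g(\theta_t) = H(\theta^\star) u_t + e(u_t)$ with $H(\theta^\star)=\nabla^2 V^{\theta}_\tau(\rho)|_{\theta=\theta^\star}$ symmetric and a quadratic remainder obeying $\|e(u_t)\|_2 \le C\|u_t\|_2^2$ locally. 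Conditioning on $\theta_t$ and invoking Assumption~\ref{ass:noise:ZM} (the Markov property and $\mathbb{E}[Z_t\mid\theta_t]=0$) lets me factor the noise out through its conditional LMGF, giving the identity $\mathbb{E}[\exp\langle\lambda,u_{t+1}\rangle\mid\theta_t] = \exp(\langle (I-\eta_t H(\theta^\star))\lambda, u_t\rangle - \eta_t\langle\lambda,e(u_t)\rangle + \Lambda(\eta_t\lambda;\theta_t))$.

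Second, I would localize everything at $\theta^\star$. Assumption~\ref{ass:CLMGF:reg} replaces $\Lambda(\eta_t\lambda;\theta_t)$ by $\Lambda(\eta_t\lambda;\theta^\star)$ at the cost of $L_{\Lambda}\eta_t^2\|\lambda\|_2^2\|u_t\|_2$, while the drift remainder contributes $\eta_t\|\lambda\|_2\,C\|u_t\|_2^2$. The essential point is that these random corrections are negligible: conditioning on the event $\mu>0$, Lemma~\ref{lem:LDP:up} supplies a uniform exponential tail on $V^{\theta_t}_\tau(\rho)-V^{\theta^{\star}}_\tau(\rho)$, hence—via the quadratic-growth consequence of the PL condition—on $\|u_t\|_2^2$, so after taking expectations the corrections are finite and of higher order. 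Using $I - \eta_t H(\theta^\star) = Q(I-\eta_t D)Q^\top$, I obtain a clean recursion $\Gamma_{t+1}(\lambda) \le \exp(\Lambda(\eta_t\lambda;\theta^\star)+\mathrm{rem}_t(\lambda))\,\Gamma_t(Q(I-\eta_t D)Q^\top\lambda)$.

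Third, I would unroll this recursion down to $\Gamma_1$, bounding $\log\Gamma_{t+1}(\lambda)$ by $\sum_{k=1}^t\Lambda(\eta_k\prod_{j=k+1}^t(I-\eta_j H(\theta^\star))\lambda;\theta^\star)$ plus the accumulated remainders plus the boundary term $\log\Gamma_1(\prod_{j=1}^t(I-\eta_j H(\theta^\star))\lambda)$. Substituting $\lambda\mapsto t\lambda$ and dividing by $t$ is where the structure emerges: in each eigendirection $\prod_{j=k+1}^t(1-\eta\rho_i/(j+t_0+1)) = (k/t)^{\eta\rho_i}(1+o(1))$, so the $k$-th argument satisfies $t\eta_k\prod_{j>k}(\cdots)\lambda \to \eta Q D(k/t)Q^\top\lambda$ with $D(x)=\mathrm{diag}(x^{\eta\rho_i-1})$. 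The sum $\frac1t\sum_{k=1}^t\Lambda(\eta Q D(k/t)Q^\top\lambda;\theta^\star)$ is then a Riemann sum converging to the claimed integral; the step-size condition $\mu\eta>1$ of Lemma~\ref{lem:LDP:up} (which forces $\eta\rho_i>1/2$) guarantees both integrability of $D(x)$ against the sub-Gaussian $\Lambda$ near $x=0$ and the decay $\frac1t\log\Gamma_1(\cdots)\to0$ (the argument vanishes and $\Gamma_1$ is bounded by Assumption~\ref{ass:init}). Collecting the accumulated remainders and the Taylor/product errors and expanding for small $\lambda$ produces the stated cubic correction $r(\lambda)=O(\|\lambda\|_2^3)$.

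The main obstacle is the non-convexity: in contrast to the strongly convex analysis of~\cite{bajovic2022large}, the linearization $g(\theta_t)\approx H(\theta^\star)u_t$ is only valid locally, so the whole argument hinges on showing that the quadratic drift remainder $e(u_t)$ and the LMGF swap error do not survive the large-deviation scaling. This is exactly what the exponential concentration of Lemma~\ref{lem:LDP:up}—available only after conditioning on $\mu>0$—delivers, and the delicate part is making the uniform-in-$t$ control of these random corrections compatible with the interchange of limit, expectation, and Riemann sum needed to pass to the integral.
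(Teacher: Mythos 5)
Your overall skeleton matches the paper's proof (both adapt the strategy of \cite{bajovic2022large}): linearize $g$ around $\theta^\star$ via $g(\theta)=H(\theta^\star)(\theta-\theta^\star)+h(\theta)$, swap $\Lambda(\cdot;\theta_t)$ for $\Lambda(\cdot;\theta^\star)$ using Assumption~\ref{ass:CLMGF:reg}, unroll a one-step MGF recursion whose argument evolves through the backward products $\prod_j (I-\eta_j H(\theta^\star))$, and identify the Riemann sum $\tfrac{1}{t}\sum_{k}\Lambda(\eta Q D(k/t)Q^\top\lambda;\theta^\star)$ converging to the stated integral (the paper outsources exactly this limit to \citep[Lem.~C.3]{bajovic2022large}). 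Your observation that $\mu\eta>1$ is what makes $x\mapsto x^{\eta\rho_i-1}$ harmless near $x=0$ is also consistent with the paper's Step~0.

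There is, however, a genuine gap at the one step that makes this theorem non-trivial in the non-convex setting: your claim that the random corrections $L_\Lambda\eta_t^2\|\lambda\|_2^2\|u_t\|_2$ and $\eta_t\|\lambda\|_2\,C\|u_t\|_2^2$ become ``finite and of higher order'' after taking expectations does not follow, and the clean multiplicative recursion you write down cannot be obtained this way. These corrections sit \emph{inside} the exponential, coupled multiplicatively to the main term, so your recursion requires an inequality of the form
\begin{equation*}
\mathbb{E}\!\left[\exp\!\left(\langle a,u_t\rangle + c_1\|u_t\|_2 + c_2\|u_t\|_2^2\right)\right] \le \exp\!\left(\mathrm{rem}_t(\lambda)\right)\mathbb{E}\!\left[\exp\!\left(\langle a,u_t\rangle\right)\right],
\end{equation*}
which a high-probability tail bound on $\|u_t\|_2$ cannot deliver: the left-hand side is dominated by (and, with a quadratic term in the exponent, may even be infinite on) precisely the rare event where $\|u_t\|_2$ is large---which is also where the local Taylor bound $\|e(u_t)\|_2\le C\|u_t\|_2^2$ that you invoke is itself invalid. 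The paper resolves this with a truncation that your proposal lacks: it splits $\Gamma_{\ell+1}(\zeta_\ell)=\Gamma_{\ell+1|B_\ell(\theta^\star,\delta)}(\zeta_\ell)+\Gamma_{\ell+1|B_\ell^c(\theta^\star,\delta)}(\zeta_\ell)$. Inside the ball the corrections are bounded by the \emph{deterministic} quantity $r_0(\lambda,\delta)$, giving the multiplicative recursion~\eqref{equ:bound:inner}; outside the ball the linearization is abandoned entirely, and a crude bound built from the contraction inequality~\eqref{equ:theta:rec} and sub-Gaussianity is integrated against the exponential tail supplied by Lemma~\ref{lem:LDP:up}, with the radius $\bar\delta(\lambda)$ chosen large enough that this region contributes only a bounded \emph{additive} constant~\eqref{equ:bound:outer}. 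The true recursion is therefore multiplicative-plus-additive, and carrying the additive term through the unrolling (the paper's Step~5) is part of the work. Your final paragraph correctly identifies this as ``the delicate part,'' but as written your Step~2 asserts the resolution rather than proving it.
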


To prove Theorem~\ref{lem:cumulant:LMGF}, we will largely follow the proof strategy of~\cite[Lem.~6]{bajovic2022large}, yet, with technical deviations regarding indexing of the sequences. In addition, we cannot appeal to the strong convexity exploited in~\citep{bajovic2022large}.

\begin{proof}
We provide a brief overview of the proof. Step 0 constructs a deterministic sequence as a basis for deriving recursions of MGF. Step 1 defines a ball around the optimal parameter and split the cases depending on whether $\theta_\ell$ resides within the ball. Step 2-4 study individual cases leading to their respective recursive relation. Finally, Step 5 summarizes the previously derived recursions, takes the limit and concludes. 
\textit{Step 0, sequence.} Fix some vector $\lambda\in \mathbb{R}^d$, integer $t\geq 1$ and define the sequence
of vectors $\zeta_{\ell} = B_{t,\ell} \zeta_t$ via $B_{t,\ell}=(I-\eta_{\ell}H(\theta^{\star}))\cdot ... \cdot (I-\eta_{t-1} H(\theta^{\star}))$ and $\zeta_{t} = t\lambda$, with $1\leq \ell < t$. As $\eta_t\leq 1/L_1$ we have that $\|I-\eta_t H(\theta^{\star})\|_2\leq 1 - \eta_t \lambda_{\mathrm{min}}(H(\theta^{\star}))$. Hence, using~\cite[Lem.~2]{bajovic2022large} and exploiting ${\eta \lambda_{\mathrm{min}}(H(\theta^{\star}))}>1$,~\textit{i.e.}, recall $\eta \mu > 1$ from Lemma~\ref{lem:LDP:up},  one can show that 
\begin{equation*}
    \|\zeta_{\ell}\|_2\leq t \left(\frac{\ell+t_0+1}{t+t_0+1} \right)^{\eta \lambda_{\mathrm{min}}(H(\theta^{\star}))}\|\lambda\|_2 \leq (\ell+t_0+1)\|\lambda\|_2.
\end{equation*}
\textit{Step 1, conditional MGF.}
Let $\mu_{\ell}$ and $\nu_{\ell}$ be Borel measures induced by $\theta_{\ell}$ and $\|\theta_{\ell}-\theta^{\star}\|_2$, respectively. Then, one readily shows that 
   $ \Gamma_{\ell+1}(\zeta_{\ell}) = \textstyle\int_{\mathbb{R}^d} \Gamma_{\ell+1|\ell}(\zeta_{\ell};\theta)\mu_{\ell}(\mathrm{d}\theta).$ 
Next, define the sets $B_{\ell}(\theta^{\star},\delta) = \{\theta_{\ell}:\|\theta_{\ell}-\theta^{\star}\|_2\leq \delta\}$.
Now we can construct the decomposition $\Gamma_{\ell+1}(\zeta_{\ell})=\Gamma_{\ell+1|B_{\ell}(\theta^{\star},\delta)}(\zeta_{\ell})+\Gamma_{\ell+1|B^c_{\ell}(\theta^{\star},\delta)}(\zeta_{\ell})$, which we study separately.
\textit{Step 2, $\theta\in B_{\ell}(\theta^{\star},\delta)$.}
We have that 
    $\Gamma_{\ell+1|\ell}(\zeta_{\ell};\theta) =  \mathrm{exp}\left(\Lambda (\eta_{\ell}\zeta_{\ell};\theta) + \langle \zeta_{\ell} , \theta- \eta_{\ell}g(\theta) - \theta^{\star} \rangle \right).$
Let $H(\theta)$ denote the Hessian of $V^{\pi_{\theta}}_\tau(\rho)$ at $\theta$ and define the residual term $h(\theta) = g(\theta)-H(\theta^{\star})(\theta-\theta^{\star})$. 
Recall Step 0, Assumption~\ref{ass:CLMGF:reg} and define the largest residual term $\Bar{h}(\delta) = \sup_{\theta\in B_{\ell}(\theta^{\star},\delta)}\|h(\theta)\|_2$, it follows that 
\begin{align*}
     \Gamma_{\ell+1|\ell}(\zeta_{\ell};\theta) \leq & \mathrm{exp}\left(\Lambda (\eta_{\ell}\zeta_{\ell};\theta^{\star}) + L_{\Lambda}\eta_{\ell}^2 \|\zeta_{\ell}\|_2^2 \delta + \eta_{\ell}\|\zeta_{\ell}\|_2 \bar{h}(\delta) + \langle \zeta_{\ell-1},\theta-\theta^{\star}\rangle\right)\\
 \leq & \mathrm{exp}\left(\Lambda (\eta_{\ell}\zeta_{\ell};\theta^{\star}) + r_0(\lambda,\delta)\right)\mathrm{exp}(\langle \zeta_{\ell-1},\theta-\theta^{\star}\rangle)
\end{align*}
for $r_0(\lambda,\delta) = 4L_{\Lambda}\eta^2 \|\lambda\|_2^2 \delta + 2\eta\|\lambda\|_2 \bar{h}(\delta)$.
Hence, integrating, we find that 
\begin{equation}
    \label{equ:bound:inner}
    \Gamma_{\ell+1|B_{\ell}(\theta^{\star},\delta)}(\zeta_{\ell}) \leq  \mathrm{exp}\left(\Lambda (\eta_{\ell}\zeta_{\ell};\theta^{\star}) + r_0(\lambda,\delta)\right)\Gamma_{\ell}(\zeta_{\ell-1}).
\end{equation}
\textit{Step 3, contraction.} 
We construct another recursive formula. Start from
\vspace{-5pt}
\begin{align*}
    \|\theta_{t+1}-\theta^{\star}\|_2 &= \|\theta_t - \eta_t g(\theta_t) + \eta_t Z_t - \theta^{\star}\|_2\leq \|\theta_t - \eta_t g(\theta_t)  - \theta^{\star}\|_2 + \eta_t \|Z_t\|_2,
\end{align*}
and expand 
$    \|\theta_t - \eta_t g(\theta_t)  - \theta^{\star}\|^2_2 = \|\theta_t - \theta^{\star}\|_2^2- 2\eta_t \langle \theta_t-\theta^{\star},g(\theta_t) \rangle + \eta_t^2 \|g(\theta_t)\|_2^2.$
Due to optimality of $\theta^{\star}$ and $L_1$-smoothness of $V^{\theta}_{\tau}$ we have $\|g(\theta_t)\|_2^2 \leq L_1^2 \|\theta_t-\theta^{\star}\|_2^2$ and
\begin{align*}
    - \langle \theta_t-\theta^{\star},g(\theta_t) \rangle &\leq \tfrac{1}{2}L_1 \|\theta_t - \theta^{\star}\|_2^2 + (V^{\theta_t}_{\tau}(\rho)-V^{\theta^{\star}}_{\tau}(\rho))\\
    &\leq \tfrac{1}{2}L_1 \|\theta_t - \theta^{\star}\|_2^2 + \frac{1}{2\mu(\theta_t)}\|g(\theta_t)\|_2^2
    \leq \tfrac{1}{2}L_1 \|\theta_t - \theta^{\star}\|_2^2 + \frac{L_1^2}{2\mu(\theta_t)}\|\theta_t - \theta^{\star}\|_2^2.
\end{align*}
As such, one can set $\gamma_t:=(1+\eta_t L_1 + \eta_t L_1^2/M\mu(\theta_t)+\eta_t^2 L_1^2)^{1/2}\leq (3 + L_1/\mu)^{1/2}=:\Bar{\gamma}$ such that 
\begin{align}
\label{equ:theta:rec}
    \|\theta_{t+1}-\theta^{\star}\|_2
    &\leq \gamma_t\|\theta_t  - \theta^{\star}\|_2 + \eta_t \|Z_t\|_2 \quad t=1,\dots, T. 
\end{align}
\textit{Step 4, $\theta\in B^c_{\ell}(\theta^{\star},\delta)$.} 
Using Step 3 and Assumption~\ref{ass:subG} one can show that \newline
   $ \allowbreak\Gamma_{\ell+1|\ell}(\zeta_{\ell};\theta) \leq \mathrm{exp}(\tfrac{1}{2}\sigma^2\eta^2 \|\lambda\|_2^2 )\mathrm{exp}(\Bar{\gamma}(\ell+t_0+1)\|\lambda\|_2 \|\theta-\theta^{\star}\|_2). $
Hence, we have
\begin{align*}
    \Gamma_{\ell+1|B^c_{\ell}(\theta^{\star},\delta)}(\zeta_{\ell}) \leq \mathrm{exp}\left(\tfrac{1}{2}\sigma^2\eta^2 \|\lambda\|_2^2 \right) \int_{\tau \geq \delta}\mathrm{exp}(\Bar{\gamma}(\ell+t_0+1)\|\lambda\|_2 \tau)\nu_{\ell}(\mathrm{d}\tau). 
\end{align*}
Exploiting Lemma~\ref{lem:LDP:up},~\textit{i.e.} $\nu_{\ell}$ being induced by $\mb P\left(\|\theta_{\ell}-\theta^{\star}\|_2 \ge \delta\right) \le e^{1-(\ell+t_0)L_1\delta^2/(2K)}$, we know there is a sufficiently large $\delta(\lambda)=O(\Bar{\gamma}\|\lambda\|_2 K/L_1)$, denoted as $\bar{\delta}(\lambda)$ such that  
\begin{align}
\label{equ:bound:outer}
    \Gamma_{\ell+1|B^c_{\ell}(\theta^{\star},\delta)}(\zeta_{\ell}) \leq \Bar{C}\mathrm{exp}\left(\tfrac{1}{2}\sigma^2\eta^2 \|\lambda\|_2^2 \right) 
\end{align}
for some constant $\Bar{C}$~\textit{cf.}~\cite[Proof of Lem.~6]{bajovic2022large}.
\textit{Step 5, limiting recursion.} Combining~\eqref{equ:bound:inner} and~\eqref{equ:bound:outer} then yields
  $ \Gamma_{\ell+1}(\zeta_{\ell}) \leq \Bar{C}\mathrm{exp}\left(\tfrac{1}{2}\sigma^2\eta^2 \|\lambda\|_2^2 \right) + \mathrm{exp}\left(\Lambda (\eta_{\ell}\zeta_{\ell};\theta^{\star}) + r(\lambda)\right)\Gamma_{\ell}(\zeta_{\ell-1}),$
where $r(\lambda)=r_0(\lambda,\bar{\delta}(\lambda))$. Continuing with the recursion provides us with
\begin{align*}
    \Gamma_{t+1}(t\lambda) \leq &\mathrm{exp}\left(\textstyle\sum^t_{\ell=1} \Lambda(\eta_{\ell}\zeta_{\ell};\theta^{\star}) + r(\lambda)\right)\Gamma_1(\zeta_1) \\&+\Bar{C}\mathrm{exp}\left(\tfrac{1}{2}\sigma^2\eta^2 \|\lambda\|_2^2 \right)\textstyle\sum^t_{\ell=1}\mathrm{exp}\left(\textstyle\sum^t_{j=\ell} \Lambda(\eta_{j}\zeta_{j};\theta^{\star}) + r(\lambda)\right)
\end{align*}
Note that $\Gamma_1(\zeta_1)$ is finite by Assumption~\ref{ass:init}.
Then we immediately obtain that
\begin{equation*}
    \limsup_{t\to +\infty} t^{-1}\log \Gamma_{t+1}(t\lambda) \leq r(\lambda) + \limsup_{t\to +\infty}t^{-1}\textstyle\sum^{t}_{\ell=1}\Lambda(\eta_{\ell}\zeta_{\ell};\theta^{\star}). 
\end{equation*}
To complete the proof, we appeal to~\citep[Lemma C.3]{bajovic2022large}
and find that 
\begin{align*}
    \lim _{t \rightarrow+\infty} t^{-1} \textstyle\sum_{\ell =1}^{t} \Lambda(\eta_{\ell}\zeta_{\ell};\theta^{\star})=\int_0^1 \Lambda\left(\eta Q D(x) Q^{\top} \lambda ; \theta^{\star}\right) \mathrm{d} x.
\end{align*}
\vspace{-10pt}
\end{proof}
\noindent To provide our main result, we recall that the \textit{Legendre-Fenchel transform} of a function $\Psi:\mathbb{R}^d\to \mathbb{R}$ is defined by $I(\theta')=\sup_{\lambda\in \mathbb{R}^d}\langle \theta',\lambda\rangle - \Psi(\lambda)$ for all $\theta'$. 

{
The Theorem below is a strict generalization of Lemma~\ref{lem:LDP:up}. Specifically, Theorem~\ref{thm:LDP:bound} characterizes the concentration rate applicable to any Borel set. 
}
\begin{theorem}[LDP upper bound]
\label{thm:LDP:bound}
Suppose that Assumption~\ref{ass:init},~\ref{ass:noise:ZM},
\ref{ass:subG} and~\ref{ass:CLMGF:reg} hold and set $\eta_t=\eta/(t+t_0+1)$ as in Lemma~\ref{lem:LDP:up}. Define $\Psi$ as in Theorem~\ref{lem:cumulant:LMGF}. Then, with probability at least $1-\epsilon/6$ we have that the sequence $(\theta_t)_t$ satisfies a LDP upper bound with a rate function $I$. The function $I$ is the Legendre-Fenchel transformation of $\Psi$. That is, for any Borel set $\Theta \subseteq \mathbb{R}^{|\mathcal{S}||\mathcal{A}|}$, we have that 
\begin{equation}
\label{equ:thm}
    \limsup_{t\to +\infty}\frac{1}{t}\log \mathbb{P}(\theta_t \in \Theta) \leq - \inf_{\theta'+\theta^{\star}\in \overline{\Theta}} I(\theta').
\end{equation}
\end{theorem}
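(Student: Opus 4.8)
The plan is to derive the claimed LDP upper bound from the limiting LMGF estimate in Theorem~\ref{lem:cumulant:LMGF} by invoking the upper-bound half of the G\"artner-Ellis theorem. The key input is already established: conditioning on the high-probability event $\{\mu>0\}$, we have $\limsup_{t\to+\infty} t^{-1}\log \Gamma_t(t\lambda) \le \Psi(\lambda)$ for every $\lambda\in\mathbb{R}^d$. Recall that $\Gamma_t$ is the MGF of the shifted iterate $\theta_t-\theta^\star$. So the natural strategy is to work throughout with the shifted variable $X_t := \theta_t-\theta^\star$, whose scaled cumulant generating functional is controlled by $\Psi$, and whose rate function is $I = \Psi^\ast$, the Legendre-Fenchel transform. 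The shift explains the form $\inf_{\theta'+\theta^\star\in\overline{\Theta}} I(\theta')$ in~\eqref{equ:thm}: the event $\{\theta_t\in\Theta\}$ is the event $\{X_t \in \Theta-\theta^\star\}$, and the infimum of $I$ over the closure of $\Theta-\theta^\star$ is exactly $\inf_{\theta'+\theta^\star\in\overline{\Theta}} I(\theta')$.

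First I would verify the hypotheses needed for the G\"artner-Ellis upper bound. The cleanest route uses the standard fact (see~\cite{dembo2011large}) that if a sequence of laws $(\mu_t)$ satisfies $\limsup_t t^{-1}\log\int e^{\langle t\lambda,x\rangle}\mu_t(\mathrm{d}x) \le \Psi(\lambda)$ for all $\lambda$, then for every \emph{closed} set $F$, $\limsup_t t^{-1}\log\mu_t(F) \le -\inf_{x\in F}\Psi^\ast(x)$, provided one has exponential tightness (so that one can pass from compact sets to all closed sets). The upper bound over compact sets is obtained by a Chebyshev/Markov argument: for any closed $F$ and any $x\in F$ choose $\lambda$ nearly achieving the sup in $\Psi^\ast(x)$, cover a compact subset of $F$ by finitely many such half-space exponential inequalities, and take the limsup. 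The step I expect to be the main obstacle is \emph{exponential tightness}, i.e.\ producing a sequence of compact sets $K_\alpha$ with $\limsup_t t^{-1}\log \mathbb{P}(X_t\notin K_\alpha)\le -\alpha$, because the abstract G\"artner-Ellis statement only yields the bound over closed sets once tightness is in hand, and a general Borel set must be handled via its closure.

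Fortunately, exponential tightness here is not hard, and this is where I would lean on Lemma~\ref{lem:LDP:up}. That lemma already gives an explicit exponential tail: $\mathbb{P}\bigl(V^{\theta_t}_\tau(\rho)-V^{\theta^\star}_\tau(\rho)\ge\delta\bigr)\le e^{1-(t+t_0+1)\delta/K}$, and combined with the quadratic-growth consequence of the PL condition (which controls $\|\theta_t-\theta^\star\|_2^2$ by a constant times the value gap, as used implicitly in Step~4 of the previous proof) this translates into an exponential tail on $\|X_t\|_2$ of the form $\mathbb{P}(\|X_t\|_2\ge R)\le e^{1-c(t+t_0+1)R^2}$. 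Taking $K_\alpha$ to be a closed Euclidean ball of suitable radius then furnishes exponential tightness on the same high-probability event $\{\mu>0\}$ that underlies Theorem~\ref{lem:cumulant:LMGF}. With tightness secured, the upper bound extends from compact to all closed sets, and hence to the closure of the Borel set $\Theta-\theta^\star$.

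Finally I would assemble the pieces. On the event $\{\mu>0\}$, which occurs with probability at least $1-\epsilon/6$ by Lemma~\ref{lem:nu:PL}, the scaled-cumulant bound $\Psi$ and the exponential tightness together let me invoke the G\"artner-Ellis upper bound for $(X_t)$, giving $\limsup_t t^{-1}\log\mathbb{P}(X_t\in F) \le -\inf_{x\in F} I(x)$ for every closed $F$, with $I=\Psi^\ast$. Applying this to the closed set $F=\overline{\Theta-\theta^\star} = \overline{\Theta}-\theta^\star$ and using $\mathbb{P}(\theta_t\in\Theta)=\mathbb{P}(X_t\in\Theta-\theta^\star)\le\mathbb{P}(X_t\in\overline{\Theta}-\theta^\star)$ yields $\limsup_t t^{-1}\log\mathbb{P}(\theta_t\in\Theta)\le -\inf_{x\in\overline{\Theta}-\theta^\star} I(x) = -\inf_{\theta'+\theta^\star\in\overline{\Theta}} I(\theta')$, which is precisely~\eqref{equ:thm}. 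One subtlety worth flagging is that $\Psi$ need not be finite or differentiable everywhere, so I would state the upper bound in the form that requires only the limsup cumulant estimate plus tightness (the ``upper'' half of G\"artner-Ellis), which does not demand the steepness or essential-smoothness conditions needed for the matching lower bound.
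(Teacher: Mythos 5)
Your proposal is correct and follows essentially the same route as the paper, whose entire proof is the one-line citation ``Directly from Theorem~\ref{lem:cumulant:LMGF} and the G\"artner-Ellis theorem'': you apply the upper-bound half of G\"artner-Ellis to the shifted sequence $(\theta_t-\theta^{\star})_t$ using the limsup LMGF bound $\Psi$, exactly as the paper intends (the paper even remarks after the theorem that the LDP is, strictly speaking, for $(\theta_t-\theta^{\star})_t$). Your elaboration of the details the paper leaves implicit --- that only the limsup estimate is needed for the upper bound (no steepness), that exponential tightness (obtainable either from the everywhere-finiteness of $\Psi$ or, as you do, from the tail bound of Lemma~\ref{lem:LDP:up} together with quadratic growth, mirroring Step~4 of the proof of Theorem~\ref{lem:cumulant:LMGF}) is required to pass from compact to closed sets, and that Borel sets are handled via their closures --- is sound and consistent with the paper's reasoning.
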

\begin{proof}
    Directly from Theorem~\ref{lem:cumulant:LMGF} and the \textit{G\"artner-Ellis theorem}~\citep{gartner1977large,ellis1984large}.
\end{proof}
Note, one can also bring the offset term $\theta^{\star}$ into the left-hand-side of~\eqref{equ:thm},~\textit{i.e.}, consider $\mathbb{P}(\theta_t-\theta^{\star} \in \Theta)$. Indeed, strictly speaking, the LDP upper bound is derived for the sequence $(\theta_t-\theta^{\star})_t$.


\section{Ramifications} 
\label{sec:ramifications}


One could argue that the results (bounds) from above contribute towards a better understanding of the softmax transformation, the effect of the regularization parameter $\tau$, the effect of initialization and the analysis of restrictive policy parametrizations, all active topics of research~\cite{ref:hennes2020neural,ref:mei2020escaping,li2021softmax,mei2020global}. 
However, more interesting is that Theorem~\ref{thm:LDP:bound} also allows for a simple proof of exponential decay, with high probability, for a whole family of policy parametrizations different from softmax.  

The structure of $\Psi$ (non-negativity and finiteness) immediately reveals that its Fenchel conjugate (Legendre-Fenchel transform) $I$ satisfies $I(0)=0$, better yet, $I(\theta')>0$ for all $\theta'\neq 0$. Hence, $I$ can be identified as a \textit{good} rate function~\cite[p.~4]{dembo2011large}. Moreover, for \textit{any} Borel set $\Theta$ such that $\theta^{\star}\not\in \overline{\Theta}$ we have, with high probability, that the probability $\mathbb{P}(\theta_t\in \Theta)$ decays exponentially fast. Not only that, we can provide a bound on the convergence rate for any of those sets,~\textit{i.e.}, let $r:=\inf_{\theta'+\theta^{\star}\in \overline{\Theta}} I(\theta')$, then, $\mathbb{P}(\theta_t \in \Theta) \leq \mathrm{exp}(-rt+o(t))$. 
To lift this observation to any continuous transformation of the softmax parametrization we need the following principle from the theory of large deviations. To avoid pathological examples, we assume all our sets to be topological Hausdorff spaces. 

\begin{theorem}[Contraction principle~{\cite[\S~4.2.1]{dembo2011large}}]
\label{thm:contraction}
Consider $U\subseteq \mb R^d$ and $W\subseteq\mb R^q, $ and $f:U\to W$ a continuous map. 
Define $I':W\to\mb [0,+\infty]$ through
    $I'(w)=\inf_{u\in U,w=f(u)} I(u)$ for all $w \in W,$
where $I:U\to\mb [0,+\infty]$ is a good rate function. Then, $I'$ is also a good rate function on~$W$.
\end{theorem}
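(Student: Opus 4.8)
The plan is to verify directly the two defining properties of a good rate function for $I'$: lower semicontinuity and compactness of its sublevel sets. Recall that $I$ being a good rate function means it is lower semicontinuous and that its sublevel sets $\Psi_I(\alpha) := \{u \in U : I(u) \leq \alpha\}$ are compact for every $\alpha \in [0,+\infty)$. Since compactness of sublevel sets in a Hausdorff space already forces closedness, and closedness of every sublevel set is equivalent to lower semicontinuity, the crux of the argument is to show that the sublevel sets of $I'$ are compact; the lower semicontinuity of $I'$ will then come for free.

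First I would show that the infimum defining $I'$ is attained whenever $I'(w) < +\infty$. Fixing $w$ with $I'(w) = \alpha < \infty$, pick a minimizing sequence $u_n$ with $f(u_n) = w$ and $I(u_n) \to \alpha$; for all large $n$ these points lie in the compact set $\Psi_I(\alpha+1)$, so a subsequence converges to some $u^\star \in U$. Continuity of $f$ gives $f(u^\star) = w$, while lower semicontinuity of $I$ gives $I(u^\star) \leq \liminf_n I(u_n) = \alpha$; combined with the trivial bound $I(u^\star) \geq I'(w) = \alpha$, the infimum is attained at $u^\star$.

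The key step is then the sublevel-set identity $\{w \in W : I'(w) \leq \alpha\} = f(\Psi_I(\alpha))$ for each finite $\alpha$. The inclusion $\supseteq$ is immediate: if $w = f(u)$ with $I(u) \leq \alpha$, then $I'(w) \leq I(u) \leq \alpha$. For the reverse inclusion I would invoke the attainment just established to produce a preimage $u^\star \in \Psi_I(\alpha)$ with $f(u^\star) = w$, so that $w \in f(\Psi_I(\alpha))$. Granting this identity, the sublevel set $\{w : I'(w) \leq \alpha\}$ is the continuous image of the compact set $\Psi_I(\alpha)$, hence compact, and therefore closed in the Hausdorff space $W$. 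Closedness of all sublevel sets is exactly lower semicontinuity of $I'$, and together with $I' \geq 0$ (inherited from $I \geq 0$, using the convention $\inf \emptyset = +\infty$ for $w \notin f(U)$) this shows that $I'$ is a good rate function.

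I expect the only genuinely subtle point to be the attainment of the infimum, where the compactness hypothesis (the \emph{goodness} of $I$) is essential: without it the sublevel-set identity could fail, and $I'$ need not even be lower semicontinuous. Everything else is a routine assembly of continuity of $f$, lower semicontinuity of $I$, and the elementary fact that continuous images of compact sets are compact.
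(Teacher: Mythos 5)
Your proof is correct and takes the same route as the paper's source: the paper itself gives no proof of this theorem, deferring entirely to the citation \cite[\S~4.2.1]{dembo2011large}, and your argument --- attainment of the infimum on compact sublevel sets, the identity $\{w : I'(w)\le\alpha\}=f(\{u : I(u)\le\alpha\})$, and compactness of continuous images of compact sets in a Hausdorff space --- is precisely the standard proof found there. The only point worth flagging is that extracting a convergent subsequence from a minimizing sequence uses \emph{sequential} compactness, which is legitimate here because $U\subseteq\mathbb{R}^d$ is metrizable, but in an abstract Hausdorff setting one would instead argue via nets or via the nonempty intersection of the nested compact sets $\{u : f(u)=w,\ I(u)\le\alpha+1/n\}$.
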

Although we studied a softmax policy, the \textit{contraction principle}, originally used by~\cite{donsker1976asymptotic}, allows us to extend our LDP results to any continuous transformation $f$ of $\theta_t-\theta^{\star}$,~\textit{e.g.}, one recovers the \textit{escort transformation}~\citep{ref:mei2020escaping} by transforming $\theta_t$ component-wise by the map $\mathbb{R}\ni w\mapsto p\log|w|$ for $p\geq 1$. To be explicit, whereas the softmax policy is given by~\eqref{equ:softmax}, the escort parametrization is given by
$
\pi_\theta(a | s)={|{\theta(s, a)}|^p}/{\sum_{a'\in \mathcal{A}} |{\theta\left(s, a'\right)|^p}}$.
Hence, we directly infer that the exponential decay rate extends from the softmax- to the escort parametrization. Indeed, $w\mapsto p\log|w|$ is not a smooth function, yet, for $p\geq 2$ the escort policy parametrization is differentiable, which is precisely the setting for which an exponential decay rate under entropic regularization is provided by~\cite[Thm.~4]{ref:mei2020escaping}.
To formally summarize this observation, by combining Theorem~\ref{thm:LDP:bound} and Theorem~\ref{thm:contraction} we get the following.
\begin{corollary}[Going beyond softmax]
\label{cor:beyond}
Let $(\theta_t-\theta^{\star})_t\subset \mathbb{R}^{|\mathcal{S}||\mathcal{A}|}$ be a sequence corresponding to softmax policy gradient and let the sequence $(\omega_t-\omega^{\star})_t\subset \mathbb{R}^q$ be defined via a continuous map $f:\mathbb{R}^{|\mathcal{S}||\mathcal{A}|}\to \mathbb{R}^q$ through $\omega_t-\omega^{\star}:=f(\theta_t-\theta^{\star})$ $\forall t$, then, with probability at least $1-\epsilon/6$, 
\begin{equation}
\label{equ:cor}
    \limsup_{t\to +\infty}\frac{1}{t}\log \mathbb{P}(\omega_t \in \Omega) \leq - \inf_{\omega'+\omega^{\star}\in \overline{\Omega}} I'(\omega'),
\end{equation}
for any Borel set $\Omega\subseteq \mathbb{R}^q$ and $I'$ as in Theorem~\ref{thm:contraction}. 
\end{corollary}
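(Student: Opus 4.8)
The plan is to transfer the one-sided LDP of Theorem~\ref{thm:LDP:bound} from the centred iterates $\tilde\theta_t := \theta_t - \theta^\star$ to their continuous image $\tilde\omega_t := \omega_t - \omega^\star = f(\tilde\theta_t)$, and then to recognise the transferred rate as the contracted rate $I'$. Throughout I would work on the event $\{\mu = \inf_{t\ge1}\mu(\theta_t)>0\}$, which by Lemma~\ref{lem:nu:PL} carries probability at least $1-\epsilon/6$; since $f$ is deterministic, conditioning on this event does not alter the claimed probability. By the closing remark of Theorem~\ref{thm:LDP:bound}, on this event $(\tilde\theta_t)_t$ obeys, for every Borel $\Theta\subseteq\mb R^{|\mc S||\mc A|}$, the upper bound $\limsup_t t^{-1}\log\mb P(\tilde\theta_t\in\Theta)\le -\inf_{\theta'\in\overline\Theta}I(\theta')$, with $I$ a \emph{good} rate function (as recorded in the discussion preceding Theorem~\ref{thm:contraction}).

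First I would reduce the target event to one about $\tilde\theta_t$. Writing $\Omega-\omega^\star$ for the translate of $\Omega$, we have $\{\omega_t\in\Omega\}=\{f(\tilde\theta_t)+\omega^\star\in\Omega\}=\{\tilde\theta_t\in f^{-1}(\Omega-\omega^\star)\}$, and $\Theta:=f^{-1}(\Omega-\omega^\star)$ is Borel because $f$ is continuous and $\Omega$ is Borel. Applying the displayed bound to this particular $\Theta$ immediately gives $\limsup_t t^{-1}\log\mb P(\omega_t\in\Omega)\le -\inf_{\theta'\in\overline\Theta}I(\theta')$.

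The heart of the argument is to lower-bound $\inf_{\theta'\in\overline\Theta}I(\theta')$ by the contracted rate. Here I would invoke the topological fact that the preimage of a closed set under a continuous map is closed, so $\overline\Theta=\overline{f^{-1}(\Omega-\omega^\star)}\subseteq f^{-1}(\overline{\Omega-\omega^\star})$ (the right-hand set is closed and contains $\Theta$, hence contains its closure). Enlarging the feasible set only lowers the infimum, so $\inf_{\theta'\in\overline\Theta}I(\theta')\ge\inf_{\theta'\in f^{-1}(\overline{\Omega-\omega^\star})}I(\theta')$. Grouping the points of the preimage by their image and using the definition of $I'$ from Theorem~\ref{thm:contraction} yields $\inf_{\theta'\in f^{-1}(\overline{\Omega-\omega^\star})}I(\theta')=\inf_{\omega'\in\overline{\Omega-\omega^\star}}\inf_{\theta':\,f(\theta')=\omega'}I(\theta')=\inf_{\omega'\in\overline{\Omega-\omega^\star}}I'(\omega')$. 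Since translation is a homeomorphism, $\overline{\Omega-\omega^\star}=\overline\Omega-\omega^\star$, so $\omega'\in\overline{\Omega-\omega^\star}$ is equivalent to $\omega'+\omega^\star\in\overline\Omega$; chaining the three estimates reproduces exactly~\eqref{equ:cor}. The goodness of $I'$, which makes the right-hand side a bona fide rate function, is furnished by Theorem~\ref{thm:contraction}.

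The main obstacle is topological rather than probabilistic: one must resist the temptation to write $\overline{f^{-1}(A)}=f^{-1}(\overline A)$, since in general only the inclusion $\subseteq$ holds, and then verify that this one-sided inclusion points in the direction that preserves the upper bound. It is worth flagging that we use—and need—only the upper half of the contraction principle; the matching lower bound is unavailable because Theorem~\ref{thm:LDP:bound} supplies a one-sided LDP, but the upper bound transfers using nothing more than continuity of $f$ together with the preimage-of-closed-is-closed property.
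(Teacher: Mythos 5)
Your proof is correct, and it is essentially the argument the paper intends but never writes down: the paper's entire ``proof'' of Corollary~\ref{cor:beyond} is the one-line remark that it follows ``by combining Theorem~\ref{thm:LDP:bound} and Theorem~\ref{thm:contraction}.'' Your write-up supplies exactly what that combination requires, and it is worth stressing that something genuinely needed filling in: Theorem~\ref{thm:contraction} as stated in the paper only asserts that $I'$ is a good rate function --- it does not state any transfer of the deviation bound --- and the textbook contraction principle presupposes a full two-sided LDP, which Theorem~\ref{thm:LDP:bound} does not provide. Your chain of steps --- rewriting $\{\omega_t\in\Omega\}=\{\theta_t-\theta^{\star}\in f^{-1}(\Omega-\omega^{\star})\}$, invoking the inclusion $\overline{f^{-1}(A)}\subseteq f^{-1}(\overline{A})$ that continuity of $f$ gives, and the grouping identity $\inf_{\theta'\in f^{-1}(A)}I(\theta')=\inf_{\omega'\in A}I'(\omega')$ together with the translation $\overline{\Omega-\omega^{\star}}=\overline{\Omega}-\omega^{\star}$ --- is precisely the upper-bound half of the contraction principle's proof, specialised to this setting, and your observation that only this half is used (and only this half is available) is the key point the paper leaves implicit. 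The handling of the probability-$(1-\epsilon/6)$ conditioning, by noting that $f$ is deterministic so the event $\{\inf_{t\ge1}\mu(\theta_t)>0\}$ is unaffected, is also the right way to carry the high-probability qualifier through.
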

Similar to the escort transformation, one can study \textit{spherical}- and \textit{Taylor} softmax~\citep{ref:BrebissonV15}. However, Corollary~\ref{cor:beyond} is ought to be most interesting to apply to prospective policy parametrizations leading to~\eqref{equ:cor} being non-trivial. Indeed, we emphasize that one must verify that the solution to the non-linear optimization problem~\eqref{equ:cor} is non-trivial, there is no free lunch. Moreover, the gradient estimation errors needs to remain well-behaved after the transformation, which may not be true for an arbitrary continuous mapping~$f$. We leave it to future work to explore the exact characterizations of the scope of Corollary~\ref{cor:beyond}. 

To remove the $(1-\epsilon/6)$-high probability statements throughout, future work aims at studying the multiphase algorithm from~\citep{ding2021beyond} in the context of large deviations, this requires being able to study a controlled sequence $(Z_t)_t$, which is a non-trivial extension. We also remark that in the context of SGD for an objective satisfying a PL condition, the LDP upper bound always holds, simply since $\mu>0$, \textit{cf.}~Lemma~\ref{lem:nu:PL}.
\bibliography{names.bib}

\end{document}